\documentclass[draftclsnofoot,onecolumn]{IEEEtran}

\usepackage{balance}

\usepackage{cite}

\usepackage{amssymb}

\usepackage{amsmath}
\interdisplaylinepenalty=2500

\usepackage{amsthm}

\usepackage{algorithmic}

\usepackage{array}

\usepackage{bbm}

\newtheorem{theorem}{Theorem}
\newtheorem{lemma}{Lemma}
\newtheorem{corollary}{Corollary}

\newtheorem{remark}{Remark}
\newtheorem{claim}{Claim}

\DeclareMathAlphabet{\mathcurve}{OMS}{cmsy}{m}{n}

\newcommand{\define}{\overset{\triangle}{=}}
\newcommand{\argmin}[1]{\operatorname{arg}\min_{#1}}
\newcommand{\argmax}[1]{\operatorname{arg}\max_{#1}}
\newcommand{\E}[2]{\mathbb{E}_{#1}\left[#2\right]}

\DeclareMathOperator{\project}{Proj}
\newcommand{\proj}[2]{\project_#1\left(#2\right)}

\newcommand{\reals}{\mathbb{R}}
\newcommand{\dimcnt}{N}
\newcommand{\set}[1]{\mathcurve{#1}}
\newcommand{\Set}{\set{K}}
\newcommand{\subgrads}[2]{\partial #1(#2)}

\newcommand{\ZZ}{\mathbb{Z}}

\newcommand{\loss}{g}

\newcommand{\decision}{x}
\newcommand{\altdec}{v}

\newcommand{\xts}{\{\decision_t^*\}_{t=1}^T}
\newcommand{\dynamicregret}{R_T^d}

\usepackage{microtype}
\usepackage{graphicx}
\usepackage{subfigure}
\usepackage{booktabs}

\usepackage{hyperref}

\usepackage{algorithmic}

\begin{document}

\title{
	Universal Online Convex Optimization with Minimax Optimal $2^{nd}$-Order Dynamic Regret
}
\author{
	Hakan~Gokcesu,
	and~Suleyman~S.~Kozat
	\thanks{
		This study is partially supported by Turkcell Technology within the framework of 5G and Beyond Joint Graduate Support Programme coordinated by Information and Communication Technologies Authority.
		
		This work is also supported in part by Outstanding Researcher Programme Turkish Academy of Sciences. 
		
		H. Gokcesu and S. S. Kozat are with the Department of Electrical and Electronics Engineering, Bilkent University, Ankara, Turkey; e-mail: \{hgokcesu, kozat\}@ee.bilkent.edu.tr
		
		H. Gokcesu is also with Turkcell Teknoloji, Istanbul, Türkiye.
		
		S. S. Kozat is also with DataBoss A.S., Ankara, Türkiye.
	}
}

\maketitle

\begin{abstract}
	We introduce an online convex optimization algorithm which utilizes projected subgradient descent with optimal adaptive learning rates.
	Our method provides second-order minimax-optimal dynamic regret guarantee (i.e. dependent on the sum of squared subgradient norms) for a sequence of general convex functions, which may not have strong-convexity, smoothness, exp-concavity or even Lipschitz-continuity. The regret guarantee is against any comparator decision sequence with bounded path variation (i.e. sum of the distances between successive decisions). 
	We generate the lower bound of the worst-case second-order dynamic regret by incorporating actual subgradient norms. We show that this lower bound matches with our regret guarantee within a constant factor, which makes our algorithm minimax optimal. We also derive the extension for learning in each decision coordinate individually. We demonstrate how to best preserve our regret guarantee in a truly online manner, when the bound on path variation of the comparator sequence grows in time or the feedback regarding such bound arrives partially as time goes on. 
	We further build on our algorithm to eliminate the need of any knowledge on the comparator path variation, and provide minimax optimal second-order regret guarantees with no a priori information.
	Our approach can compete against all comparator sequences simultaneously (universally) in a minimax optimal manner, i.e. each regret guarantee depends on the respective comparator path variation. 
	We discuss modifications to our approach which address complexity reductions for time, computation and memory. We further improve our results by making the regret guarantees also dependent on comparator sets' diameters in addition to the respective path variations. 
\end{abstract}
\begin{IEEEkeywords}
	online learning, convex optimization, gradient descent, dynamic regret, minimax optimal, universal guarantee
\end{IEEEkeywords}

\newpage
\section{Introduction}
\subsection{Preliminaries}
Convex programming, a major topic of online learning \cite{Shalev-Shwartz}, is extensively studied in the fields of 
automatic control, computational learning theory, signal processing and analysis.
In many tasks of optimization or prediction, the aim is to minimize some loss or error, many of which are convex functions, possibly time-varying. 
Examples include predictive control \cite{predictive_control}, network resource allocation \cite{Chen}, distributed agent optimization \cite{distributed_algorithms,Koppel}, fault diagnosis \cite{fault_diagnosis}, stochastic programming \cite{Liu}, adaptive filtering \cite{Liang}, beamforming \cite{Slavakis_adaptive} and classification \cite{Slavakis_online,gokcesu2021optimally}.

In the online optimization or learning setup, 
the convex objectives (i.e. the loss functions $f_t(\cdot)$) arrive sequentially. In particular, at each time $t$, we, the learner, or the controller, produce a decision $\decision_t$, and then, suffer the loss $f_t(\decision_t)$.
The role of a learning procedure, or the controlling agent,
is to choose $\decision_t$ so that the cumulative loss $\sum_{t=1}^{T} f_t(\decision_t)$ is minimized. 
To exemplify, in the 
sequential linear regression problem under 
absolute 
error, at each $t$, we decide on a parameter vector $\decision_t$, then, the nature reveals a feature vector $v_t$ and a desired output $d_t$, and we suffer the 
loss $f_t(\decision_t) = |\decision_t^\top v_t - d_t|$, 
which is a convex function with respect to 
$\decision_t$. 

In this work, we derive a learning algorithm applicable for any
convex loss function sequence $\{f_t(\cdot)\}_{t=1}^T$, 
that may not necessarily display additional desirable properties such as
strong-convexity, smoothness, exp-concavity or even Lipschitz-continuity, unlike \cite{Besbes,Mokhtari,Jadbabaie,Yang,Zhang_adaptive,Zhang_strongly,Zinkevich,gokcesu2021generalized}. 
The performance of such an online learning algorithm is traditionally evaluated relative to the best fixed decision in hindsight, 
e.g. the optimal fixed parameter vector. This evaluation metric is called
\emph{static regret}, which measures the difference between cumulative losses of our algorithm and the best fixed decision $\decision^*$. 
However, such a metric is insufficient in online dynamic scenarios where the best fixed decision itself performs poorly. To exemplify, consider an online learner to determine the optimal controller for a simple dynamic and time-varying zeroth-order (gain) discrete system with an open-loop (no feedback) circuit. Suppose the reference signal $r_t$ (desired output) is causal with the energy $\sum_{t=0}^{\infty} = E$. For some $\tau$, let the system gain be $S_0$ for $t<\tau$ and $S_1$ for $t\geq \tau$, and also, $\sum_{t=0}^{\tau-1} |r_t|^2 = E_0$ and $\sum_{t=\tau_e}^{\infty} |r_t|^2 = E_1$. The optimal static controller gain $C^*$ (lowest error energy) would be 
$
	C^*
	= \argmin{C} \left\{
		|1-S_0 C|^2 E_0 + |1-S_1 C|^2 E_1
	\right\}, 
$
as opposed to a time-varying controller with gain $C_0=S_0^{-1}$ for $t<\tau$ and $C_1=S_1^{-1}$ for $t\geq \tau$. Note that such an optimal dynamic controller would incur zero error, while the error of an optimal static controller could be very large, e.g., for $E_0=E_1$ and $S_1=-S_0$, the optimal (lowest energy) error signal would equal to the reference itself.
%which predicts a binary $0-1$ variable at each round. Suppose the total number of rounds is $2T$, where, during the first half of rounds, the probability of a variable being $1$ is some $p<0.5$ and, during the latter $T$ rounds, it is $(1-p)$. The best fixed estimator (decision) for $p$ is around $0.5$ (depends on the actual realizations), which is not really better than random guess.   

Henceforth, instead of the static regret, we measure performance with the generalized notion of \emph{dynamic regret}, 
allowing a time-varying comparator decision sequence $\{\decision_t^*\}_{t=1}^T$.
State-of-the-art algorithms in the literature achieve dynamic regret guarantees by imposing additional assumptions on the properties of $\{f_t(\cdot)\}_{t=1}^T$, which 
may not hold 
in real life scenarios, such as strong convexity \cite{Besbes,Mokhtari} (positive lower bound on the eigenvalues of Hessian matrix), Lipschitz-continuity  \cite{Besbes,Jadbabaie,Yang,Zhang_adaptive,Zhang_strongly,Zinkevich,prox_Villa_bounded_grad,prox_Bianchi_nonadaptive,TAC_OCO_nonadaptive,gokcesu2021regret,pmlr-v84-shen18a} (upper bound on subgradient norms), Lipschitz-smoothness \cite{Mokhtari,Yang,prox_Combettes_smooth} (upper-bounded eigenvalues of Hessian), and bounded temporal functional variations \cite{Zhang_strongly}. 

\newpage
More restrictive settings have also been investigated where a learner has access to the full information of the past functions 
\cite{Yang} or queries the subgradient of each $f_t(\cdot)$ at multiple points without incurring any additional losses \cite{Zhang_improved}. Such settings are also incompatible with many real life applications where the evaluation of each subgradient is costly (e.g. computationally) or even equivalent to making a decision, hence actually incurring a loss. 
In addition to the general online convex optimization, there exist dynamic studies for a specialized linear optimization problem, i.e. prediction with expert advice \cite{gokcesu2020recursive,Cesa-Bianchi_a,Herbster,Gokcesu2020AGO}. Moreover, adaptive regret guarantees dependent on subgradient norms are achieved for static regret \cite{Duchi,McMahan,Gupta}.

\subsection{Contributions}
As the first time in the literature, we introduce an efficient online projected subgradient descent algorithm for any sequence of convex loss functions,  
with dynamic regret guarantee of $$O\left(\sqrt{\left(D_* P_* + D^2\right)\sum_{t=1}^{T} \|g_t\|^2}\right)$$ where $D$ is the diameter of the projection set (e.g. feasible decision set) to which all $\decision_t$ and $\decision_t^*$ belong, $D_*$ is the maximum distance between any $\decision_t^*$,
i.e. 
$D_* = \max_{(t,\tau)} \|\decision_t^* - \decision_\tau^*\|,
$ 
and $P_*$ is 
the path variation measuring the complexity of our competition, 
i.e. 
$P_*=\sum_{t=1}^{T-1} \|\decision_{t+1}^* - \decision_t^*\|$.
This guarantee is simultaneously achieved against all comparators, universally. 

Even though it is varying against different comparators (proportional to $P_*$ and $D_*$), it is shown to be minimax optimal for each comparator individually. This optimality is demonstrated by showing that the regret guarantees match the respective worst-case dynamic regret lower bounds with adaptive dependency on the subgradient norms instead of some preconceived bounds.
Furthermore, via certain extensions, our complexities (computation, time, memory) can be efficiently reduced with certain (generally acceptable) trade-offs.
The development of our results is summarized as follows.

We derive an algorithm which achieves minimax optimal guarantees against any comparator sequence $\{\decision_t^*\}_{t=1}^T$ where $\sum_{t=1}^{T-1} \|\decision_{t+1}^* - \decision_t^*\| \leq P$ and $P$ is known a priori by the algorithm.
We enhance our method with the ability to incorporate a time-growing $P$, i.e. $P(T)$, which is reasonable as $\sum_{t=1}^{T-1} \|\decision_{t+1}^* - \decision_t^*\|$ is nondecreasing with $T$. Then, we alternatively suppose the knowledge of $P$ arrives partially throughout optimization which is a less restrictive setting.

We introduce our universal method which assumes no knowledge regarding $P$, i.e. we compete against any comparator sequence $\{\decision_t^*\}_{t=1}^T$, and each competition is individually minimax optimal where the regret bounds depend on each comparator sequence separately via $$P_* = \sum_{t=1}^{T-1} \|\decision_{t+1}^* - \decision_t^*\|, \quad D_* = \max_{(t,\tau): 1\leq t<\tau\leq T} \|\decision_t^* - \decision_\tau^*\|.$$ 

Our work on universality, e.g. parameter-free algorithm (no prior setting of $P_*$ and $D_*$), is the most novel, eliminating the need of any prior (hindsight) information and producing a truly online minimax optimal approach as a first in the literature.

\subsection{Organization}
The work is constructed as follows. In Section \ref{sec:oco}, we formally describe the problem. In Section \ref{sec:adagrad}, we introduce our base algorithm and derive its dynamic regret upper bound for a known fixed $P$, and show its simplicity and superiority against some existing approaches. Following that, we demonstrate 
the optimality of our algorithm by introducing a worst-case dynamic regret lower bound that matches our regret guarantee up to a constant factor
and investigate the cases when path variation is coordinate-wise separable, knowledge on $P$ partially arrives and known $P$ grows in time, respectively. In Section \ref{sec:no_know}, we adapt our algorithm so that no prior knowledge of $P$ is required, and show its capability to provide universal regret guarantees. Specifically, we demonstrate the ability to obtain guarantees for each possible path variation and effective diameters implied by each comparator sequence in a joint and universal manner. In Section \ref{sec:simulation}, we demonstrate our performance by simulating an optimization task. We conclude with Section \ref{sec:conclusion}. Many proofs of the analyses are given in the appendix.

\newpage
\section{Problem Description} \label{sec:oco}
We have a sequence of convex functions $f_t: \Set \rightarrow \reals$ for discrete times $t \geq 1$, where $\Set$ is a convex\footnote{For all $\decision,\altdec \in \Set$, $(\lambda \decision + (1-\lambda)\altdec) \in \Set$ for any $0\leq \lambda\leq 1$.}, closed and bounded subset of $\reals^{\dimcnt}$.
Each $\decision \in \Set$ is a column vector, $\decision^\top$ is its transpose, $\decision^\top \altdec$ is its inner product with $\altdec \in \Set$, and $\|\decision\| = \sqrt{\decision^\top \decision}$ is its Euclidean norm. The projection $\proj{\Set}{\decision}$ solves $\argmin{\altdec \in \Set} \|\decision-\altdec\|^2$, a relatively simple computation when $\Set$ is a hyper-ellipsoid/rectangle.

Convexity of each $f_t(\cdot)$ implies the first-order relation
\begin{equation} \label{eq:convexity}
	f_t(\decision) - f_t(\altdec) \leq g_t^\top (\decision - \altdec)
\end{equation}
for every pair $\decision,\altdec \in \Set$ and every subgradient 
$g_t \in \subgrads{f_t}{\decision}$.

Then, the \emph{dynamic regret}, denoted as $\dynamicregret$, is defined as
\begin{equation} \label{eq:linearized_regret}
	\dynamicregret
	\define \sum_{t=1}^T f_t(\decision_t) - f_t(\decision_t^*) \leq \sum_{t=1}^T g_t^\top (\decision_t - \decision_t^*),
\end{equation}
where $\{\decision_t\}_{t=1}^T$ and $\{\decision_t^*\}_{t=1}^T$ are the algorithm's and comparator's (best) decision sequences, respectively, and the inequality comes from \eqref{eq:convexity} for any $g_t \in \subgrads{f_t}{\decision_t}$.
This is a tight bound when 
$f_t(\cdot)$ are only known to be convex and holds with equality for linear functions $f_t(\decision) = g_t^\top \decision$.

Note that $\dynamicregret$ cannot be bounded in a nontrivial manner, i.e. sublinear $o(T)$ bounds, without some restrictions on $\xts$, which becomes apparent due to the worst-case (adversarial) regret lower bounds we discuss in Section \ref{sec:lower_bound}.
Thus, we 
control the complexity of our competition class by considering
$\xts$ such that $\xts \in \Omega_T^P$ where
\begin{equation*} \label{eq:restricted_best_space}
	\Omega_T^P = \left\{\xts : \decision_t^* \in \Set, \sum_{t=1}^{T-1} \|\decision_{t+1}^* - \decision_t^*\| \leq P\right\},
\end{equation*}
with $P \leq D(T-1)$ (naturally) for $D = \sup_{\decision,\altdec \in \Set} \|\decision-\altdec\|$, which is the diameter of $\Set$. The parameter $P$ of the competition class $\Omega_T^P$ is also called \emph{path variation} \cite{Yang}. This class generalizes the special case $P=0$ corresponding to the \emph{static regret} (best fixed decision).

\newpage
\section{
	Online Projected Subgradient Descent} \label{sec:adagrad}
\renewcommand{\figurename}{Algorithm }
\begin{figure}
	\centering
	\caption{Online Subgradient Descent with Projection}\label{algorithm:gradient_descent}
	\begin{algorithmic}[1]
		\REQUIRE $\decision_1 \in \Set$, $g_t \in \subgrads{f_t}{\decision_t}$ for $t\geq1$.
		\ENSURE $\decision_t \in \Set$ for $t \geq 2$.
		\STATE Initialize $t=1$.
		\WHILE{$g_t$ is the zero-vector}\label{0-vec}
		\STATE $\decision_{t+1} = \decision_t$.
		\STATE $t \leftarrow t+1$.
		\ENDWHILE
		\WHILE{$g_t$ received \textbf{and} not terminated by the user}
		\IF{$g_t$ is not the zero-vector}
		\STATE Decide $\eta_t$.
		\STATE Set $\decision_{t+1} = \proj{\Set}{\decision_t - \eta_t g_t}$.
		\STATE $t \leftarrow t+1$.
		\ENDIF
		\ENDWHILE
	\end{algorithmic}
\end{figure}
We use online subgradient descent with projection as shown in \figurename \ref{algorithm:gradient_descent} to update our decisions. It is a variant of the proximal descent method when we do not assume any underlying functional bias and it is still commonly studied \cite{TAC_PGD}. Given the feasible decision set $\Set$, and Euclidean projection operator $\proj{\Set}{\cdot}$, the utilized update is such that
\begin{equation*}
	\decision_{t+1} = \proj{\Set}{\decision_t - \eta_t g_t},
\end{equation*}
for learning rates (step sizes) $\eta_t$ and subgradients $g_t$.
In the following theorem, we 
investigate a general regret guarantee for a nonincreasing positive learning rate sequence. Then, we show how to sequentially select ideal learning rates.

\begin{theorem} \label{theorem:gradient_descent_regret}
If we run 
\figurename \ref{algorithm:gradient_descent} with a nonincreasing positive $\eta_t$ sequence, the
dynamic regret can be bounded as,
\begin{displaymath}
\dynamicregret \leq \frac{D^2(P/D+1/2)}{\eta_T} + \sum_{t=t_0}^T \frac{\eta_t}{2} \|g_t\|^2
\end{displaymath}
where $D$ is the diameter of $\Set$, $P \geq \sum_{t=1}^{T-1} \|\decision_{t+1}^* - \decision_t^*\|$ and $\|g_\tau\|=0$ for $\tau < t_0$. 
\end{theorem}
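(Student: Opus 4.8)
The plan is to begin from the linearized dynamic regret inequality \eqref{eq:linearized_regret}, $\dynamicregret \leq \sum_{t=1}^T g_t^T(\decision_t - \decision_t^*)$. Every round with $g_\tau$ equal to the zero vector leaves the decision unchanged (by the first loop of the algorithm) and contributes $0$ to this sum, so the summation effectively starts at $t_0$. For each active round I would use the projected update $\decision_{t+1} = \proj{\Set}{\decision_t - \eta_t g_t}$ together with the nonexpansiveness of the Euclidean projection onto the convex set $\Set$ and the membership $\decision_t^* \in \Set$; expanding $\|\decision_{t+1} - \decision_t^*\|^2 \leq \|\decision_t - \eta_t g_t - \decision_t^*\|^2$ and rearranging gives the standard one-step bound
\[
g_t^T(\decision_t - \decision_t^*) \leq \frac{1}{2\eta_t}\left(\|\decision_t - \decision_t^*\|^2 - \|\decision_{t+1} - \decision_t^*\|^2\right) + \frac{\eta_t}{2}\|g_t\|^2 .
\]
Summing over $t_0 \leq t \leq T$, the terms $\tfrac{\eta_t}{2}\|g_t\|^2$ already reproduce the second term of the claimed bound, so the task reduces to controlling $A \define \sum_{t=t_0}^T \tfrac{1}{2\eta_t}(\|\decision_t-\decision_t^*\|^2 - \|\decision_{t+1}-\decision_t^*\|^2)$.

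The obstacle is that $A$ does not telescope, since the comparator moves from $\decision_t^*$ to $\decision_{t+1}^*$ between consecutive terms and $\eta_t$ varies. I would handle this by summation by parts: reindexing the subtracted norms rewrites $A$ as two boundary terms, $\tfrac{1}{2\eta_{t_0}}\|\decision_{t_0}-\decision_{t_0}^*\|^2$ and $-\tfrac{1}{2\eta_T}\|\decision_{T+1}-\decision_T^*\|^2$, plus $\sum_{t=t_0+1}^T\big(\tfrac{1}{2\eta_t}\|\decision_t-\decision_t^*\|^2 - \tfrac{1}{2\eta_{t-1}}\|\decision_t-\decision_{t-1}^*\|^2\big)$. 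This reorganization references only $\decision_{t-1}^*$, never the out-of-range $\decision_{T+1}^*$. Within each summand I would replace the reference point $\decision_{t-1}^*$ by $\decision_t^*$: writing $\decision_t - \decision_{t-1}^* = (\decision_t - \decision_t^*) + (\decision_t^* - \decision_{t-1}^*)$, expanding, and lower-bounding the cross term by Cauchy--Schwarz with $\|\decision_t - \decision_t^*\| \leq D$ yields $\|\decision_t - \decision_{t-1}^*\|^2 \geq \|\decision_t - \decision_t^*\|^2 - 2D\|\decision_t^* - \decision_{t-1}^*\|$.

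Substituting this lower bound turns each summand into $\big(\tfrac{1}{2\eta_t} - \tfrac{1}{2\eta_{t-1}}\big)\|\decision_t-\decision_t^*\|^2$ plus a drift penalty $\tfrac{D}{\eta_{t-1}}\|\decision_t^* - \decision_{t-1}^*\|$. Because $\eta_t$ is nonincreasing, the coefficient $\tfrac{1}{2\eta_t}-\tfrac{1}{2\eta_{t-1}}$ is nonnegative, so I may bound $\|\decision_t-\decision_t^*\|^2$ by $D^2$ and telescope, obtaining $D^2(\tfrac{1}{2\eta_T}-\tfrac{1}{2\eta_{t_0}})$; combined with the boundary terms (the negative one discarded, the $\tfrac{1}{2\eta_{t_0}}$ one cancelling the telescope) this collapses to $\tfrac{D^2}{2\eta_T}$. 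For the drift penalty I would use $\eta_{t-1}\geq\eta_T$ to factor out $\tfrac{1}{\eta_T}$ and $\sum_{t=t_0+1}^T\|\decision_t^*-\decision_{t-1}^*\| \leq \sum_{t=1}^{T-1}\|\decision_{t+1}^*-\decision_t^*\| \leq P$, giving $\tfrac{DP}{\eta_T}$. Adding these reproduces exactly $\tfrac{D^2(P/D+1/2)}{\eta_T}$, and the full bound follows; nonnegativity is immediate since every term on the right is nonnegative. The main technical hurdle is precisely this second stage, where the time-varying rate forces the summation-by-parts reorganization and must be coordinated with the comparator-drift conversion, all while keeping constants tight enough to match the coefficient $P/D+1/2$ rather than a looser one.
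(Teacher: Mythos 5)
Your proof is correct and follows essentially the same route as the paper's: the same one-step projected-descent bound via non-expansiveness, the same comparator-drift inequality $\|\decision_t-\decision_{t-1}^*\|^2 \geq \|\decision_t-\decision_t^*\|^2 - 2D\|\decision_t^*-\decision_{t-1}^*\|$, and the same telescoping argument exploiting nonincreasing $\eta_t$. The only cosmetic difference is the order of operations (you reindex by summation by parts and then shift the comparator, while the paper shifts the comparator first and then regroups coefficients), which yields identical intermediate bounds and the same constant $P/D+1/2$.
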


\newpage
\subsection{Optimal Learning Rates}
We first define the following quantities, which will be used to determine the optimal subgradient descent learning rates at each time, i.e. $\{\eta_t\}_{t=1}^T$. For $t\geq1$, with $G_0 
= 0$,
\begin{align} \label{eq:root_sum_g_t^2}
G_t 
\define \sqrt{G_{t-1}^2 + \|g_{t}\|^2}
= \sqrt{\sum_{\tau=1}^t \|g_\tau\|^2}.
\end{align}

\begin{corollary} \label{corollary:gradient_descent_fixed_eta}
If we were to use the optimal constant learning rate $\eta_t = \eta^*$, which minimizes the right-hand side of the guarantee in Theorem \ref{theorem:gradient_descent_regret}, it yields $\eta_t = D\sqrt{1+2P/D} G_T^{-1}$ and
\begin{displaymath}
\dynamicregret \leq D\sqrt{1+2P/D}G_T,
\end{displaymath}
for $P \geq \sum_{t=1}^{T-1} \|\decision_{t+1}^* - \decision_t^*\| \geq 0$.
\end{corollary}
\begin{proof}
If $G_T = 0$, 
$R_T \leq \sum_{t=1}^{T}0^\top (\decision_t - \decision_t^*) = 0 = D G_T$.
If $G_T > 0$ instead, it follows from Theorem \ref{theorem:gradient_descent_regret}.
\end{proof}

The optimal constant rate $\eta_t = \eta^*$ requires the future information 
$\|g_\tau\|$ for $\tau > t$.
We now present an adaptive causal learning 
scheme 
using only the past information.
\begin{corollary} \label{corollary:gradient_descent_adaptive_eta}
If we use $\eta_t = DG_t^{-1}\sqrt{\hat{P}/D+1/2}$, we obtain the following
regret guarantee
$$		\dynamicregret \leq DG_T\left(
\frac{P/D+1/2}{\sqrt{\hat{P}/D + 1/2}} + \sqrt{\hat{P}/D + 1/2}
\right),
$$
which simplifies into
\begin{displaymath}
\dynamicregret
\leq 2DG_T\sqrt{\hat{P}/D+1/2} ,
\end{displaymath}
if $\hat{P} \geq P$, and is optimal (minimized) for $\hat{P} = P$.
\end{corollary}

The regret guarantees in Corollaries \ref{corollary:gradient_descent_fixed_eta} and \ref{corollary:gradient_descent_adaptive_eta} are -up to a constant factor- equivalent to the minimax regret lower bound, yet to be shown in Theorem \ref{theorem:lower_bound_sum}.

\newpage
\subsection{Comparing with Projections using Self Outer Products}
The adaptive regret guarantee in Corollary \ref{corollary:gradient_descent_adaptive_eta} for $P=0$ outperforms $O\left(tr\left(A_T\right)\right)$, which is the static regret guarantee provided by the projected normalized sub-gradient descent using the root of self outer products sum $A_T = \sqrt{\sum_{t=1}^T g_t g_t^T}$, e.g. Ada-Grad with full matrix divergences \cite{Duchi}. 

\begin{claim} \label{claim}
	Given a vector sequence $g_t$ for $1 \leq t \leq T$,
	\begin{displaymath}
		\sqrt{\sum_{t=1}^T \|g_t\|^2}
		\leq
		tr\left(\sqrt{\sum_{t=1}^T g_t g_t^T}\right),
	\end{displaymath}
	where the square root of the outer products sum on the right-hand side corresponds to its unique positive semidefinite principal square root, 
	and the trace operation $tr(\cdot)$ takes the sum of elements on the main diagonal.
\end{claim}

There is a discrepancy between the left and right sides of Claim \ref{claim} which could rise up to a multiplicative term of $\sqrt{\dimcnt}$ (when the eigenvalues $\lambda_i$ of the outer product sum are similar) -where $\dimcnt$ is the dimension of our decision set $\Set$- even though the algorithm we present is more efficient, i.e. at each time $t$ our algorithm computes the inner product $g_t^T g_t$ while algorithms with full divergences compute the outer product $g_t g_t^T$. 

\subsection{Minimax Dynamic Regret Lower Bounds} \label{sec:lower_bound}
Given any online, i.e. sequential and causal, learning algorithm, we 
show that there exists a sequence of $\{f_t(\cdot)\}_{t=1}^T$ such that the regret in \eqref{eq:linearized_regret} is lower bounded 
as follows.

\begin{theorem} \label{theorem:lower_bound_sum}
	For 
	$\sum_{t=1}^{T} \|g_t\|^2 = G_T^2$, for any causal algorithm, there exists a $\{f_t(\cdot)\}_{t=1}^T$ sequence such that this algorithm may incur a worst-case dynamic regret $\overline{\dynamicregret}$ as
	\begin{displaymath}
		\overline{\dynamicregret} \geq (DG_T/2\sqrt{2}) \sqrt{\lfloor P/D \rfloor+1} ,
	\end{displaymath}
	where $D$ is the diameter of $\Set$ and $P \geq \sum_{t=1}^{T-1} \|\decision_{t+1}^* - \decision_t^*\|$.
\end{theorem}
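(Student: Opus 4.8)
The plan is to prove a matching lower bound by the probabilistic method, exhibiting a hard \emph{linear} loss sequence for which the regret inequality in \eqref{eq:linearized_regret} holds with equality. First I would reduce to a one-dimensional problem: since $D$ is the diameter of $\Set$, there exist $a,b\in\Set$ with $\|a-b\|=D$, and by convexity the segment joining them lies in $\Set$. Writing $\hat u=(b-a)/D$, I restrict all sub-gradients to this direction, $g_t=\sigma_t\,c_t\,\hat u$ with prescribed magnitudes $c_t=\|g_t\|$ and signs $\sigma_t\in\{-1,+1\}$ drawn independently and uniformly at random. With linear losses $f_t(\decision)=g_t^T\decision$ the dynamic regret equals $\sum_t g_t^T(\decision_t-\decision_t^*)=\sum_t \sigma_t c_t\,\hat u^T(\decision_t-\decision_t^*)$, so only the scalar coordinates $x_t=\hat u^T\decision_t$ matter, and since $\hat u^T(c-d)\le\|c-d\|\le D$ for all $c,d\in\Set$, the admissible coordinate range is an interval of length exactly $D$, which (taking its midpoint as origin) I identify with $[-D/2,D/2]$.

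Next I would split the horizon into $K=\lfloor P/D\rfloor+1$ consecutive blocks $B_1,\dots,B_K$ (note $K\le T$ because $P\le D(T-1)$) and choose the magnitudes so that every block carries equal energy $\sum_{t\in B_k}c_t^2=G_T^2/K$, keeping the prescribed total $\sum_t c_t^2=G_T^2$. Within block $B_k$ I let the comparator be constant and equal to whichever endpoint $a$ or $b$ minimizes its own cumulative loss in hindsight; measuring from the midpoint, its coordinate is the endpoint opposite in sign to $S_k=\sum_{t\in B_k}\sigma_t c_t$, giving a per-block comparator loss of $-\tfrac{D}{2}|S_k|$. Because consecutive comparator values differ by at most $D$ and there are $K-1$ block boundaries, the path variation is at most $(K-1)D=\lfloor P/D\rfloor\,D\le P$, so this comparator lies in $\Omega_T^P$ for every realization of the signs.

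The core estimate is the expected-regret lower bound. Since the algorithm is causal, $x_t$ is independent of $\sigma_t$, so $\E{\sigma}{\sum_t \sigma_t c_t x_t}=0$ and the learner's expected loss vanishes; hence
\[
\E{\sigma}{\dynamicregret}\;\ge\;\frac{D}{2}\sum_{k=1}^{K}\E{\sigma}{|S_k|}.
\]
The main obstacle is the within-block anti-concentration of $|S_k|$: I would invoke Khintchine's inequality with its sharp constant, $\E{\sigma}{|S_k|}\ge 2^{-1/2}\big(\sum_{t\in B_k}c_t^2\big)^{1/2}=2^{-1/2}G_T/\sqrt K$, which is exactly where the factor $1/\sqrt2$ enters (the factor $1/2$ coming from the half-diameter endpoints). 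Summing the $K$ equal-energy blocks yields $\E{\sigma}{\dynamicregret}\ge \tfrac{D}{2\sqrt2}\,K\cdot G_T/\sqrt K=\tfrac{D\sqrt K}{2\sqrt2}G_T$.

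Finally I would invoke the probabilistic method: since the expected regret over the random signs is at least $\tfrac{D}{2\sqrt2}\sqrt{\lfloor P/D\rfloor+1}\,G_T$, there must exist a deterministic sign realization -- hence a deterministic loss sequence $\{f_t\}_{t=1}^T$ together with an admissible comparator in $\Omega_T^P$ -- attaining at least this value, which is precisely the claimed bound. I expect the delicate points to be (i) the sharp Khintchine constant together with the equal-energy partition, since Cauchy--Schwarz shows that $\sum_k\sqrt{E_k}$ reaches $\sqrt K\,G_T$ only when the block energies $E_k=\sum_{t\in B_k}c_t^2$ are balanced, and (ii) the careful accounting that the $K-1$ diameter-sized jumps of the block-constant comparator never exceed the budget $P$.
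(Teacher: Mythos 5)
Your proposal is correct and follows essentially the same route as the paper's proof: partitioning the horizon into $\lfloor P/D\rfloor+1$ equal-energy blocks with a block-constant comparator whose path variation stays within $P$, then applying Khintchine's inequality (with sharp constant $2^{-1/2}$) per block to random-sign linear losses along a diameter direction, exactly as in the Orabona-style argument the paper cites. In fact, your writeup fills in the details (causality giving zero expected learner loss, the probabilistic method, the path-budget accounting) that the paper's proof only sketches.
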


The lower bound in Theorem \ref{theorem:lower_bound_sum} matches the upper bound in Corollary \ref{corollary:gradient_descent_adaptive_eta}, within a factor, 
thus our adaptive algorithm is optimal in a strong (second-order) minimax sense. 

In the following corollary, we also discuss how the lower bound behaves for a uniform gradient norm constraint, i.e. $\exists L$ such that $\|g_t\| \leq L$ for all $t$. We generate a zeroth-order bound and replace the need of a very mild assumption for the worst-case from before, requiring the sums of squared norms from consecutive time segments $[t_{k-1}+1,t_k]$, i.e. $\sum_{t=t_{k-1}+1}^{t_k} \|g_t\|^2$, to be of the same order as each other, with Lipschitz continuity.

\begin{corollary} \label{corollary:lower_bound_max}
	For 
	$\max_{1\leq t\leq T} \|g_t\| \leq L$ instead,
	\begin{displaymath}
		\dynamicregret \geq (D/4) \sqrt{\lfloor P/D \rfloor+1} L\sqrt{T}.
	\end{displaymath}
\end{corollary}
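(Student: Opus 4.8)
The plan is to obtain Corollary~\ref{corollary:lower_bound_max} directly from Theorem~\ref{theorem:lower_bound_sum} by swapping the adversary's budget constraint: the theorem fixes the total gradient energy $\sum_{t=1}^T\|g_t\|^2 = G_T^2$, whereas the corollary instead imposes a uniform per-round cap $\max_{1\le t\le T}\|g_t\| = L$. Since the lower bound $\tfrac{D\sqrt{\lfloor P/D\rfloor+1}}{2\sqrt2}\,G_T$ is monotonically increasing in $G_T$, the strongest lower bound consistent with the cap is obtained by choosing the gradient magnitudes that maximize $G_T$ subject to $\|g_t\|\le L$ for all $t$.

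First I would observe that this maximizer is the uniform allocation $\|g_t\| = L$ for every $t$, which is admissible (it attains $\max_t\|g_t\|=L$) and yields $G_T = \sqrt{\sum_{t=1}^T L^2} = L\sqrt T$. I would then check that this uniform allocation is compatible with the construction used in the proof of Theorem~\ref{theorem:lower_bound_sum}: that construction splits the horizon into $K = \lfloor P/D\rfloor + 1$ blocks carrying equal gradient energy $G_T^2/K$, which here amounts to $T/K$ gradients of norm $L$ per block, so the same worst-case $\{f_t(\cdot)\}_{t=1}^T$ can be realized while satisfying the norm cap exactly (up to an integer rounding of the block lengths that is immaterial to the order of the bound).

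Substituting $G_T = L\sqrt T$ into Theorem~\ref{theorem:lower_bound_sum} then gives $\dynamicregret \ge \tfrac{D\sqrt{\lfloor P/D\rfloor+1}}{2\sqrt2}\,L\sqrt T$, and the stated bound follows because $\tfrac{1}{2\sqrt2} \ge \tfrac14$; equivalently, the clean constant $\tfrac14$ is exactly what the block-wise step produces if one invokes the elementary estimate $\mathbb{E}\,|\sum_i \epsilon_i| \ge \tfrac12\sqrt{n}$ in place of the sharp Khinchin constant $1/\sqrt2$.

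I expect the only genuine subtlety to be the compatibility check in the second step, namely confirming that pinning every per-round norm to $L$ does not conflict with the equal-energy-per-block requirement of the underlying construction; everything else is a monotonic substitution followed by a constant comparison. Because the adversary is free to choose both the block boundaries and the magnitudes, the uniform allocation is simultaneously the energy-maximizing and the construction-feasible choice, so no estimate beyond Theorem~\ref{theorem:lower_bound_sum} is required.
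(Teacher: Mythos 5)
Your overall strategy---reuse the block construction of Theorem \ref{theorem:lower_bound_sum} with every gradient norm pinned to $L$ and read off the bound---is the same as the paper's, but the step you dismiss as ``immaterial'' integer rounding is exactly where your argument breaks and exactly where the corollary's constant $\tfrac14$ comes from. The construction behind Theorem \ref{theorem:lower_bound_sum} needs each of the $K = \lfloor P/D\rfloor + 1$ blocks to carry energy $G_T^2/K$; with all norms equal to $L$ and integer block lengths, this equal split is achievable only when $K$ divides $T$. What the construction actually delivers in general is a per-block energy of at least $L^2\lfloor T/K\rfloor$ (blocks of as-equal-as-possible integer length), hence, via the per-block Khinchin step, a regret of at least $\frac{D}{2\sqrt{2}}\,L\,K\sqrt{\lfloor T/K\rfloor}$. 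Since $K\lfloor T/K\rfloor \le T$ with equality only when $K$ divides $T$, this is strictly smaller than your claimed intermediate bound $\frac{D\sqrt{K}}{2\sqrt{2}}\,L\sqrt{T}$ whenever $K \nmid T$. So ``substituting $G_T = L\sqrt{T}$ into Theorem \ref{theorem:lower_bound_sum}'' is not a valid move: the theorem's proof does not produce that worst case under the additional per-round cap $\|g_t\|\le L$, and the corollary is a statement with an explicit constant, not merely an order-of-magnitude claim.

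The missing (and easy) repair is the one the paper makes explicit: since $P \le D(T-1)$ gives $K \le T$, one has $\lfloor T/K\rfloor \ge (T/2)/K$, so the per-block energy is at least $L^2 (T/2)/K$ and the regret is at least $\frac{D}{2\sqrt{2}}\,L\,K\sqrt{T/(2K)} = \frac{D\sqrt{K}}{4}\,L\sqrt{T}$. In other words, the degradation from $\frac{1}{2\sqrt{2}}$ to $\frac14$ is precisely the $\sqrt{2}$ factor lost to the integrality you waved away---it is not free slack you may spend after the fact, and it is not (as your alternative explanation suggests) the result of replacing the sharp Khinchin constant $\frac{1}{\sqrt{2}}$ by the elementary $\frac12$; that account also silently presumes blocks of exactly $T/K$ rounds. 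Your conclusion is the right statement, and your identification of the subtlety was correct, but the chain of inequalities as written asserts an intermediate bound that the construction does not provide; fixing it requires the floor estimate above (together with the hypothesis $K\le T$ that makes it legitimate), which is exactly the paper's proof.
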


\newpage
\subsection{Separate Step Sizes for Each Decision Entry}
We extend our results to the case when each entry (or block) employs independent learning rates. For that, we denote the $i^{th}$ entry block of decisions $\decision, \decision_t \in \Set$ as $\decision_{(i)}, \decision_{t,i}$, respectively.
\begin{remark} \label{remark:separate_eta}
	After rewriting \eqref{eq:linearized_regret} as a sum over coordinate blocks and applying Corollary \ref{corollary:gradient_descent_adaptive_eta} for 
	each block, 
	we get
	\begin{equation*} \label{eq:gradient_descent_regret_g_{t,i}}
		R_T \leq \sum_{i=1}^\dimcnt 
		2D_i\sqrt{P_i/D_i+1/2} \sqrt{\sum_{t=1}^{T} \|g_{t,i}\|^2},
	\end{equation*}
	for $D_i = \sup_{\decision,\altdec \in \Set} \|\decision_{(i)}-\altdec_{(i)}\|$, $P_i \geq \sum_{t=1}^{T-1} \|\decision_{t+1,i}^* - \decision_{t,i}^*\|$. 
\end{remark}

The minimax optimality is preserved if the decision set is separable, i.e. 
$$\Set = \Set_1 \times \ldots \times \Set_M,$$ 
where the number of coordinate blocks is $M$, and 
$$(\decision \in \Set) \iff (\forall i\in \{1,\ldots,M\}, \decision_i \in \Set_i).$$

In the following subsections, we investigate various scenarios for path variations and gradually remove the need to pre-set the quantity $P$.

\newpage
\subsection{Partial Information on Path Variation} \label{sec:partial_info}
Here, we investigate the scenario where path variation constraint $P$ is not a priori known but is revealed gradually. Consider that, following $t_{k-1}$ for $1\leq k\leq K$, we receive a hint that until (including) $t_{k}$, we have a path variation $P_k$ for 
$\{\decision_t^*\}_{t_{k-1}+1}^{t_k}$. The incurred regret can be upper bounded 
as shown in the following.

\begin{theorem} \label{theorem:with_resets}
Assume for $1\leq k\leq K$, each $P_k$ corresponding to the best decision sequence segments $\{\decision_t^*\}_{t_{k-1}+1}^{t_k}$ with $t_0=0$ and $t_K=T$, is known at the latest following $(t_{k-1})^{th}$ round. Under such conditions, if we reset Algorithm \ref{algorithm:gradient_descent} following times $t_{k-1}$ and use the adaptive step sizes in Corollary \ref{corollary:gradient_descent_adaptive_eta} with $\hat{P} = P_k$ for $t_{k-1}< t\leq t_k$, we 
upper bound the incurred 
regret as
\begin{align*}
\dynamicregret 
\leq \sum_{k=1}^{K} 2D\sqrt{P_k/D+1/2} \sqrt{\sum_{t=t_{k-1}+1}^{t_k} \|g_t\|^2}
\end{align*}
where $D$ is the feasible set diameter. Assuming $K$ is relatively low, i.e. $K \leq C \sum_{k=1}^{K} P_k/D$ for some constant $C$, which is reasonable to assume as even the variation between successive ``best" decisions, i.e. $\|\decision_{t+1}^* - \decision_t^*\|$, can be up to $D$, or $K$ is finite, this upper bound is minimax optimal within a constant factor 
in accordance with the 
worst-case lower bound in Theorem \ref{theorem:lower_bound_sum}.
\end{theorem}

\begin{remark}
If we consider the case where a path variation feedback $P_k$ can also refer to the past such that the feedback $P_k$ arriving after $t_{k-1}$ does not bound the path variation for $t_{k-1}< t\leq t_k$ but for $\tau_k< t\leq t_k$ with $\tau_k \leq t_{k-1}$. Then, the $k^{th}$ run of the algorithm after step size resetting can utilize $\hat{P} = \min\{P_k,D(t_k-t_{k-1}-1)\}$ where the second argument of
$\min$ 
arises from the utmost limit of path variation during the new 
segment $t_{k-1}<t \leq t_k$. Additionally, whenever there
are
time segments with no path variation feedback revealed in preparation for the corresponding run (e.g. $\tau_k> t_{k-1}$), then these gaps 
utilize 
the utmost limits, 
i.e., during $t_{k-1}< t\leq t_k$, use $\hat{P} = P_k + D(\tau_k-t_{k-1}-1)$.
\end{remark}

\newpage
\subsection{Best Sequence Constraint Grows in Time} \label{sec:time_grow}
We now investigate how to handle a time-increasing $P$, i.e. a function $P(\cdot)$ such that $$P(t) \geq \sum_{\tau=1}^{t-1} \|\decision_{\tau+1}^* - \decision_\tau^*\|,$$ meaning $P(T)$ is the upper bound of the best sequence path variation for the optimization of duration $T$. We employ a sort of ``doubling trick" with some knowledge on $P(\cdot)$. For $k\geq 1$, we identify $t_k = \argmax{t: P(t)\leq D(2^{k-1}-1)} P(t)$. To exemplify, considering the natural bound $P(t) \leq D(t-1)$, and then, suppose $P(t) = D(t^p-1)$ for some real number $0\leq p\leq 1$. This means $t_k = \lfloor 2^{(k-1)/p} \rfloor$, where $\lfloor \cdot \rfloor$ is the flooring function. 

Note that we do not 
need 
to access $P(\cdot)$ in full a priori, i.e. for all integers $t\geq 1$. The identifications of $t_k$ for increasing $k$ can be done iteratively following $t_{k-1}$. 
Then, after each $t_{k-1}$, we reset Algorithm \ref{algorithm:gradient_descent} with learning rates selected according to Corollary \ref{corollary:gradient_descent_adaptive_eta} with path variation set as $P=D (2^{k-1}-1)$ for that run. When the differential $P(t_{k-1}+1) - P(t_{k-1})$ is too great, the duration of $k^{th}$ run can even be $0$, i.e. that specific run is effectively skipped.
This scheme results in the following theorem. 
\begin{theorem} \label{theorem:constraint_grows_in_time}
When the best decision sequence path variation for a $T$-length optimization is bounded by some nondecreasing $P(T) \geq \sum_{t=1}^{T-1} \|\decision_{t+1}^* - \decision_t^*\|$, 
Algorithm \ref{algorithm:gradient_descent} is reset following rounds $t_{k-1}$, with $t_0=0$. The new run lasts until (including) $t_k = \argmax{t: P(t)\leq D(2^{k-1}-1)} P(t)$, and this run sets the path variation as 
$P_k = D(2^{k-1}-1)$. 
By using the path variation in full, from 
$t=1$ up to $t=t_k$, our algorithm considers the possibility where we compete against a best decision sequence which had low to none path variation till now, i.e. $t_{k-1}$. 
The resulting 
regret is 
\begin{equation*}
\dynamicregret \leq 4D \sqrt{\frac{P(T)}{D} + \frac{6-K}{8}} \sqrt{\sum_{t=1}^{T} \|g_t\|^2},
\end{equation*}
where $K = \min_{x\in \ZZ: P(T)\leq D(2^{x-1}-1)} x$, and $D$ 
is 
the diameter of 
decision set $\Set$. This 
guarantee is minimax optimal within a constant factor 
in accordance with Theorem \ref{theorem:lower_bound_sum}.
\end{theorem}

If we cannot even query $P(\cdot)$ every round, 
we can employ an exponential search method. Following remark explains this.

\begin{remark}
Starting with the knowledge of some $t_{k-1}$, query $P(t_{k-1}2^{i})$ for $i\geq 1$ until $P(t_{k-1}2^{m}) > D(2^{k-1}-1)$ is identified, ensuring that $t_{k-1}2^{m} > t_k$.  Then, we identify $t_k \in (t_{k-1},t_{k-1}2^{m})$ via binary search, such that we have $t_k = \argmax{t: P(t)\leq D(2^{k-1}-1)} P(t)$. If the querying of $P(\cdot)$ is further restricted, e.g. it can be queried for both low number of times and only in increasing arguments, we can employ the doubling trick on $t_k$, e.g. $t_k = 2^{k}$, with the mild assumption of a near subadditive property such that we have $P(T_1+T_2) \leq P(T_1)+P(T_2+1)$, which is somewhat reasonable as $P(T) \leq D(T-1)$ (i.e., $P(\cdot)$ is at most linear). 
\end{remark}

\newpage
\section{Universal Regret for Varying Path Variations} \label{sec:no_know}
In this section, we build upon our previous findings and prepare parallel running algorithms as agents for an eventual mixing scheme (i.e., prediction with expert advice) to circumvent the need of any knowledge on the path variation $P$. As the time horizon $T$ grows, the number of such agents can be limited to $O(\log T)$ thanks to Corollary \ref{corollary:gradient_descent_adaptive_eta} by using $P_m = D(2^{m-1}-1)$. Then, the regret 
of $m^{th}$ running agent, which sets the path variation $P_m$, with $m<\log(2T)$, is bounded as follows.
\begin{corollary} \label{corollary:mth_running_algorithm}
Consider runs of Algorithm \ref{algorithm:gradient_descent}, indexed as $m^{th}$, with the resetting as in Theorem \ref{theorem:with_resets}. These runs have no access to any path variation knowledge but 
use predefined 
values 
$P_k$ 
following set reset rounds $t_{k-1}$, i.e. $m^{th}$ run uses $t_k = 2^k-1$ and $P_k 
= D(2^{k-1} - 1)$ for $1\leq k\leq m-1$. The last pair $(t_m,P_m)$ for run $m$ is set as $t_m = T$ (i.e. no more reset) and $P_m = D(2^{m-1} - 1)$. In accordance with Theorem \ref{theorem:with_resets} -and Corollary \ref{corollary:gradient_descent_adaptive_eta}-, each run incurs 
\begin{displaymath}
\dynamicregret(m) 
\leq 2D\sqrt{2^m - m/2 - 1} G_T
\end{displaymath}
whenever the actual overall path variation $P$ during the whole optimization is such that $P \leq P_m = D(2^{m-1}-1)$. 
The regret bound for $P > P_m = D(2^{m-1}-1)$ is not needed for what follows. 
Also, $2^{m^*-2} < P/D + 1 \leq 2^{m^*-1}$ for an $m^*$ and  
$R_T^d(m^*)$ is minimax optimal up to some constant factor. 
\end{corollary}
\begin{proof}
Similar to Theorem \ref{theorem:constraint_grows_in_time}.
\end{proof}

\subsection{Explicit Linear Optimization}
Here, we explain averting the need of subgradient evaluation for each agent separately running Algorithm \ref{algorithm:gradient_descent}. Similar to \cite{Zhang_adaptive, NEURIPS2018_10a5ab2d}, we show that the regret can be optimally bounded with a single subgradient evaluated at each time $t$ by considering the problem as purely linear. Using \eqref{eq:linearized_regret}, we separate the bound into two sums where each is to be optimized 
by alternating between subgradient descents and expert mixtures. 
Thus,
\begin{align} \label{eq:separated_regret}
&\dynamicregret
\leq R_T^{l,e}(m) + R_T^{l,d}(m),  
\text{ for all $m$, with}
\\
&R_T^{l,e}(m) \define \sum_{t=1}^T g_t^\top (\decision_t - \decision_t^m)
,\;
\nonumber
R_T^{l,d}(m) \define \sum_{t=1}^T g_t^\top (\decision_t^m - \decision_t^*),
\nonumber
\end{align}
where $R_T^{l,e}(m)$ denotes the static regret of the expert mixture scheme against the loss incurred by some $m^{th}$ running Algorithm \ref{algorithm:gradient_descent} 
and $R_T^{l,d}(m)$ denotes the linearized upper bound of the dynamic regret of the $m^{th}$ running Algorithm \ref{algorithm:gradient_descent}, all in accordance with Corollary \ref{corollary:mth_running_algorithm}, i.e. $\dynamicregret(m)$. 

Hence, the alternating optimization will work as follows. At each time $t$, during the first (initial) stage, $R_T^{l,d}(m)$ is optimized for all 
agents in parallel, each indexed by a different $m$, with each such agent producing a decision $\decision_{t}^m$, and, during the second (final) stage, each $R_T^{l,e}(m)$ is optimized by 
combining all 
$\decision_{t}^m$, thus producing the final decision $\decision_t$.
All that remains is to construct the way to mix the decisions of these parallel running agents. 

\newpage
\subsection{Recursive Mixture}
We consider a recursive expert mixture similar to \cite{gokcesu2020recursive}, where the base mixing algorithm is from \cite{Cesa-Bianchi_improved} with 2 experts. The reason for this choice of mixture is twofold. First, since our setting is truly online, i.e. $T$ is not predetermined and unbounded, a straightforward application of the mixing method from \cite{Cesa-Bianchi_improved} is not possible. It has to be supplemented with some variant of the doubling trick. Adding the fact that, the regret redundancy due to a regular expert mixture has an additional multiplier of $O(\sqrt{\log M})$ where $M$, the number of experts/agents (i.e., parallel runs of Algorithm \ref{algorithm:gradient_descent}) in the mixture, also grows to $O(\log T)$ in an unbounded manner, the regret from such an expert mixture would gather a possible multiplicative optimality gap which grows with time, i.e. not finite and thus invalidating the minimax optimality claim.

We can circumvent this by obtaining differing mixture regret redundancies when compared to each expert individually, i.e. the regret guarantees against the optimal and any other experts may be different. Since the guarantees from Algorithm \ref{algorithm:gradient_descent} differs for each expert, we want to carefully create this discrepancy in the mixture redundancies against each expert so that they are upper-bounded by the regret from their Algorithm \ref{algorithm:gradient_descent} counterparts within a constant factor, thus preserving the minimax optimality claim. 

\renewcommand{\figurename}{Figure }
\begin{figure}
	\caption{Recursive Mixture Illustration}
	\centering
	\includegraphics[width=0.815\linewidth]{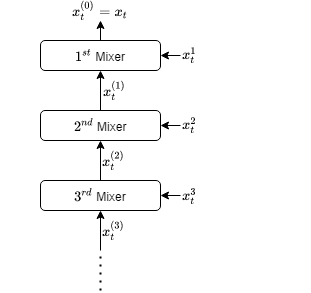}
	\label{fig:recursive}
\end{figure}

The aforementioned discrepancy in mixture redundancies is achieved exactly by the recursive application of \cite{Cesa-Bianchi_improved}, where each mixing is done at the branching-out locations in the growing ensemble of the experts in a skewed tree form, as illustrated in Figure \ref{fig:recursive}. The regret redundancy components are computed after the following separation of $R_T^{l,e}(m)$:

\begin{align}
R_T^{l,e}(m) 
= &\sum_{k=1}^{m-1} \left(\sum_{t=1}^T g_t^\top (\decision_t^{(k-1)} - \decision_t^{(k)})\right)
+ \sum_{t=1}^T g_t^\top (\decision_t^{(m-1)} - \decision_t^m),
\label{eq:expert-split}
\end{align}
where $\decision_t^{(k-1)}$ is the decision obtained by recursive mixing of another mixture output $\decision_t^{(k)}$ and $\decision_t^{k}$ from $k^{th}$ run of Algorithm \ref{algorithm:gradient_descent}; $\decision_t^{(0)} = \decision_t$, i.e. the final decision; and the sum $\sum_{k=1}^{m-1} (\cdot)$ is ignored for $m=1$. Each mixing is in effect starting at their branch-out times, until then the mixing is pointless since the two experts output the same decision. Consequently, for competing against the $m^{th}$ running Algorithm \ref{algorithm:gradient_descent}, the overall regret redundancy from expert mixture is bounded by $m$ times the base regret bound for the mixture of two experts, which is as follows.

\begin{lemma} \label{lemma:two-expert}
Producing each mixture decision $\decision_t^{(k)}$ results in a regret such that
\begin{displaymath}
\max\left( 
\sum_{t=1}^T g_t^\top (\decision_t^{(k-1)} - \decision_t^{(k)}), 
\sum_{t=1}^T g_t^\top (\decision_t^{(k-1)} - \decision_t^k)
\right)
\end{displaymath}
is $O\left(D \sqrt{ \sum_{t=1}^T \|\loss_t\|^2}\right)$.
\end{lemma}

This brings us to the following corollary.
\begin{corollary} \label{corollary:total-expert-regret}
From Lemma \ref{lemma:two-expert} and \eqref{eq:expert-split}, we conclude that the regret redundancy from the expert mixing is bounded as
\begin{displaymath}
R_T^{l,e}(m) \leq C_e  m D \sqrt{ \sum_{t=1}^T \|\loss_t\|^2},
\end{displaymath}
for some constant $C_e$.
\end{corollary}

\begin{theorem}
Using Corollaries \ref{corollary:mth_running_algorithm} and \ref{corollary:total-expert-regret}, the overall regret (dynamic) $\dynamicregret$ from \eqref{eq:separated_regret} can be bounded as
\begin{displaymath}
\dynamicregret \leq C_d \sqrt{D (P + D) \sum_{t=1}^T \|\loss_t\|^2},
\end{displaymath}
for some constant $C_d$.
\end{theorem}
\begin{proof}
It derives from the fact that $m \leq C_o \sqrt{2^{m-1}}$ for some constant $C_o$.
\end{proof}

Up till now, we have considered that our comparators need to come from a feasible set $\Set$ with diameter $D$, and nothing more. In the following subsections, we show that the regret has further efficiency for concentrated comparators.

\newpage
\subsection{Effective Decision Set} \label{section:domain}
Suppose the comparator sequence is concentrated, i.e. $\decision_t^*$ are located near some point $\decision^*$ in set $\Set$. To improve our regret bounds with regards to such an ``effective'' decision set $\Set_*$, we first notice that the algorithms in both our subgradient descent and expert mixing are scale and translation free as follows.

\begin{lemma} \label{lemma:set_map}
	Consider our previous algorithms, and two cases with same loss sequences but different feasible sets.
	\begin{itemize}
		\item Loss sequence is $\{\loss_t\}_{t=1}^T$.
		\item \textbf{Case 1}: Feasible set is $\Set_1$.
		\item \textbf{Case 2}: Feasible set is $\Set_2$.
	\end{itemize}
	Suppose $\Set_1$ and $\Set_2$ are one-to-one, i.e.,
	$$[\decision \in \Set_1] \iff [(\alpha(\decision-c_1) + c_2) \in \Set_2]$$ 
	for some scalar $\alpha$ and center points $c_1,c_2 \in \mathbb{R}^\dimcnt$. 
	Then, our methods output $\{\decision_t\}_{t=1}^T$ and $\{\alpha(\decision_t-c_1) + c_2\}_{t=1}^T$ if algorithms start with $\decision_1=c_1$ for $\Set_1$ and $\decision_1=c_2$ for $\Set_2$.
\end{lemma}

Keeping this in mind, consider that $\decision_t^*$ are such that $$\decision_t^* = x + \sum_{\tau=1}^M \altdec_t^{(\tau)}$$ for some natural number $M$, where $x \in \Set$ and $\altdec_t^{(\tau)} \in \Set_{\tau}$ for some $\Set_{\tau}$. Note that, for each $\tau$, $\Set_{\tau}$ and $\Set$ are related as described in Lemma \ref{lemma:set_map} with suitable $\alpha,c_1,c_2$.

Thanks to the universality of our algorithm, the regret becomes
\begin{equation*}
	O\left(
	D\sqrt{\sum_{t=1}^T \|\loss_t\|^2}
	+ \sum_{\tau=1}^{M} \sqrt{D_\tau (P_\tau + D_\tau) \sum_{t=1}^T \|\loss_t\|^2}
	\right),
\end{equation*}
where $D_\tau$ is the diameter of $\Set_\tau$, and $\sum_{\tau=1}^{M} P_\tau \geq P$ with the possibility to enforce equality when, only for a single $\tau$, $\altdec_{t}^{(\tau)}$ is allowed to change in succession.
For $M=1$, this can also be interpreted as
\begin{equation*}
	O\left(
	D\sqrt{\sum_{t=1}^T \|\loss_t\|^2}
	+ \sqrt{D_* (P + D_*) \sum_{t=1}^T \|\loss_t\|^2}
	\right).
\end{equation*} 

Thus, we have obtained dynamic guarantees, which no longer depend only on the diameter $D$ of the feasible set $K$, but also the diameter $D_*$ of the set incurred from the comparator sequence $\{\decision_t^{*}\}_{t=1}^T$. Hence, the regret result is also universal from a secondary perspective such that, for each comparator sequence generating from our feasible set, the diameter dependence is scale-free with respect to the comparator decisions, and this result is again achieved for all $\{\decision_t^{*}\}_{t=1}^T$ and not just for some ``best'' sequence. 
Furthermore, when comparing $D$ and $\sqrt{D_*P + D_*^2}$, if $D^2$ is $O(D_* (P+D_*))$, the minimax optimality is also preserved overall. Even if $D^2$ is not $O(D_* (P+D_*))$, the dependency on our actual set diameter $D$ is the optimal one for a fixed comparator in previous works, so it is unlikely to achieve better results.

\subsection{Unconstrained Optimization}
Here, we show how to conduct online convex optimization when the decision set is not necessarily bounded. For that, select a center point, e.g. the origin. Then, using the scale-free and translation-free properties of our algorithms as in Lemma \ref{lemma:set_map}, consider a mapping $\mathcal{K}(\cdot)$ from non-negative reals to feasible sets such that the set $\mathcal{K}(D)$ has diameter $D$ and center point $c$, e.g. Euclidean ball centered at $c$.

Next, we consider the following preliminary separation of the $\dynamicregret$ in \eqref{eq:linearized_regret}.
\begin{equation} \label{eq:domain_separate}
	\dynamicregret = \sum_{t=1}^T \loss_t^\top (D_t u_t - D_* u_t) + \sum_{t=1}^T \loss_t^\top(D_* u_t - D_* u_{t}^*),
\end{equation}
where $u_t,u_t^* \in \mathcal{K}(1)$, $D_t u_t = \decision_t$ and $D_* u_{t}^* = \decision_t^*$ such that $\decision_t \in \mathcal{K}(D_t)$ and $\decision_t^{*} \in \mathcal{K}(D_*)$. Manipulating \eqref{eq:domain_separate}, we get
\begin{displaymath}
	\dynamicregret = \sum_{t=1}^T (\loss_t^\top u_t) (D_t - D_*) + D_* \sum_{t=1}^T \loss_t^\top (u_t - u_{t}^*),
\end{displaymath}

Thus, we can optimize $D_t$ and $u_t$ separately. The second (latter) sum can be bounded by the techniques in our work as 
$$O\left(\sqrt{D_* (P + D_*) \sum_{t=1}^T \|\loss_t\|^2}\right).$$ 
Here, we have $D_*/2 \leq \max_{1\leq t\leq T} \|\decision_t^* - c\|$, and the path variation $P = \sum_{t=1}^{T-1} \|\decision_{t+1}^* - \decision_t^*\|$ for $\decision_t^*$ sequence as before.

The first sum is a one-dimensional linear optimization problem, and can be solved with our technique by setting path-variation $P$ to $0$, i.e. static comparator $D_*$, with the feasible set being $[0,D]$ for some known upper-bound $D$, where the regret bound component satisfy $O(D \sqrt{\sum_{t=1}^{T} \|g_t\|^2})$ since $(g_t^\top u_t)^2 \leq \|g_t\|^2$. Alternatively, $D_t$ can be solved as a $1D$ unconstrained optimization problem for unknown $D_*$, in a follow the leader manner, with the techniques available in the said literature, where an example regret bound satisfies both $\tilde{O}((D_*)^3\|\loss_t\| + \|\loss_t\|(D_*+1)\sqrt{T})$ or $\tilde{O}((D_*)^3\|\loss_t\|T^{1/3} + \|\loss_t\|D_*\sqrt{T} + \|\loss_t\|T^{1/3})$ \cite{pmlr-v99-cutkosky19a}. There are other techniques to solve this problem with differing regret results, which prioritizes other things and are not directly comparable to each other.
Even from the sum, we can see that no information regarding the dynamic nature of $\decision_{t}^*$ is carried to the domain optimization, i.e. the optimality regarding $P$ in a stand-alone manner is preserved. The work on unbounded domain optimization is still ongoing and as long as that area of work improves, so does our guarantees.

\newpage
\subsection{Finite-Per-Round Time/Computation Complexities}
\begin{figure}
	\caption{Illustrating Alternating Recursive Mixing Algorithms}
	\centering
	\includegraphics[width=\linewidth]{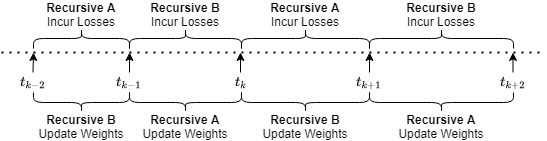}
	\label{fig:alternating}
\end{figure}

In this subsection, we investigate how to further reduce both the time and computational complexities to the $O(1)$ level. The main idea consists of running a set of two mixture frameworks (denoted as $Recursive\text{ }A$ and $Recursive\text{ }B$ respectively) using a common set of expert (run) ensemble and utilizing a wait-and-update strategy, as illustrated in \figurename \ref{fig:alternating}, which shall be explained shortly. These mixture frameworks alternate among themselves to provide the mixing probabilities $p_{t,m}$ for the generation of our final decision $\decision_t = \sum_{m=1}^{M_t} p_{t,m} \decision_t^m$. 

A major difference between the two components of our overall regret, namely $R_T^{l,e}$ and $R_T^{l,d}$, is that they are respectively static and dynamic, i.e., the comparator in one is fixed (weights) while it is time-variant (decisions) in the other. Consequently, we can follow a wait-and-update strategy for the mixture weights, i.e., wait for $\tau\leq R$ rounds and update the mixture weights afterwards using the accumulated losses in that $\tau$-length time window. This would translate into a multiplicative redundancy of at most $\sqrt{R}$ in the mixture static regret.  

This bound results from each interval $[t_{k-1}+1,t_k]$ corresponding to a wait window during which the losses $\loss_t^\top \decision_t^m$ are accumulated. Since we have $O(\log_2 t)$ agents in use, we can employ such a strategy with time-variant $\tau \in \Theta(\log_2 (t))$ for a time-frame starting at $t$. In combination with the alternating framework approach, this strategy would reduce the per-round time and computational complexities regarding expert mixture 
to $O(1)$ and would multiply $R_T^{l,e}$ by $O(\sqrt{\log_2 T})$. Framework alternation itself has only a finite effect on the regret.

\newpage
The same strategy of waiting would not work similarly for $R_T^{l,d}$ in the analyses since comparator sequence in question is not static, but dynamic, and is prone to change (however minor) during the waiting time-window. Without sacrificing deterministic regrets we had thus far, 
we can reduce only the time to $O(1)$, via parallel processing of our runs.
The computational complexity remains $O(\log_2 T)$ since we update all the essential experts at each round separately, which we need to do, as the projection operation into the feasible set $\Set$ can be a rather complicated function in the analyses, even though its computation can be rather efficient. Then, all we can do is randomly update a constant-sized subset of experts at each round, e.g. only one of them. 
The random selection can be rather cost-efficient so long as we have access to a combination of low-complexity random number generator and a look-up table.
In expected regret, since we have $\log_2 T$ experts to randomly choose among, this would result in a multiplicative redundancy of $O(\sqrt{\log_2 T})$ in $R_T^{l,d}$ and $O(\log_2 T)$ in $\left(R_T^{l,d}\right)^2$ \cite[Corollary 5]{Gokcesu}, but not $R_T^{l,e}$, since we can use all the losses incurred in mixture weight updates as previously discussed, thanks to the framework alternation and wait-and-update strategy. Even though, this regret guarantee is in expectation, since probability used to randomly select an expert is sufficiently high, i.e. $\Theta(1/\log_2 T)$, we can generate high probability guarantees using \cite{Beygelzimer} by analyzing the squared dynamic regret, i.e. $\left(R_T^{l,d}\right)^2$. After an application of $a^2 + b^2 \leq (a+b)^2$ for nonnegative values $\{a,b\}$, we incur an additive redundancy of $O(L(\log_2 T)\sqrt{\ln(1/\epsilon)})$ 
when the bound 
holds with probability at least $1-\epsilon$.

\newpage
\subsection{Lowering Memory Complexity}
\begin{figure}
\caption{Illustration of Deleting Even Runs and Nearby Mixers Joining}
\centering
\includegraphics[width=\linewidth]{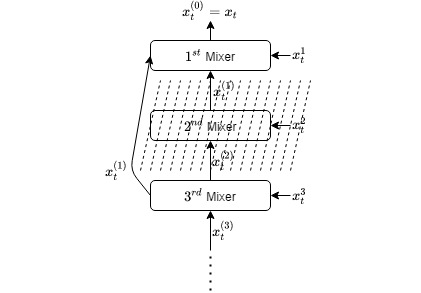}	
\label{fig:delete_evens}
\end{figure}

The regular memory usage has $O(\log_2 t)$ complexity, disregarding decision set dimension $\dimcnt$ and the like. To limit this usage with another function of $t$, i.e. $h(t)$, starting with the expert having lowest-set $P$, we would need to eliminate the even-indexed (or possibly odd) experts and loop back, re-index the runs \& repeat as needed to stay below the memory limit. With this, we can also reduce the number of cascading mixtures in effect. The procedure is illustrated in \figurename \ref{fig:delete_evens}. This would cause an increase in the dynamic regret in accordance with Corollaries \ref{corollary:gradient_descent_adaptive_eta} and \ref{corollary:mth_running_algorithm}. Let us assume in the end, between successive experts, we have at most a multiplicative discrepancy of $b$, i.e. $P_m/P_{m-1} \leq b$ for all $m$ after re-indexing of the experts following eliminations. We would come across a multiplicative redundancy of $\sqrt[4]{b}$ in the dynamic regret where the first square root is due to $P$ being inside a root in the guarantee from Corollary \ref{corollary:gradient_descent_adaptive_eta} and the second square root is thanks to us having the ability to choose among $P_{m-1}$ and $P_{m}$, which respectively lower and upper bound $P$, as the best expert in \eqref{eq:separated_regret}. Thus, 
if $b = O\left(T^{1/h(T)}\right)$ is subpolynomial, which requires infinitely growing $h(t)$ (number of expert) however slow in growth, we get subpolynomial multiplicative redundancy with respect to $T$, i.e. it is $O(T^\delta)$ for all $\delta > 0$. On the other hand, if the memory is finite, our multiplicative redundancy becomes $O(T^{0.25/h})$ for the constant memory limit, i.e. $h(T) = h$.

\newpage
\section{Simulation} \label{sec:simulation}
In this section, we simulate the performance of our approach and make relevant comparisons.
Our simulation consists of a data stream, where we want to estimate the next sample point. 

From the perspective of an automation system, or a control mechanism, this would correspond to the attempt of matching a given set of inputs to some desired outputs, which may very well act as the inputs themselves in some form (i.e., as a reference).

As shown next, our universal approach has superior performance in the problem of online convex optimization.

\subsection{Data Generation}
The specifics of our learning environment are as follows.
\begin{itemize}
	\item We have the unknown target sequence $\{y_t\}_{t=1}^T$, each of which is a two-dimensional vector.
	\item When we decide on $\decision_t$ at time $t$, we incur the $\ell^1$ error as the loss, i.e. $\|\decision_t - y_t\|_1$.
	\item There is no additional (e.g. contextual) information.
	\item For each coordinate of $y_t$, we observe whether our estimation was over or under, i.e., the sub-gradient $g_{t}$ is such that, for each dimension $k$,
	\begin{equation*}
		g_{t,k} = \begin{cases}
			  	1&  \text{ if } \decision_{t,k} > y_t,
			\\	-1&	\text{ if } \decision_{t,k} < y_t,
			\\	g&	\text{ o.w.},	
		\end{cases} \label{eq:sim_grad}
	\end{equation*}
	where $g$ can be arbitrarily located in $[-1,1]$, e.g. $g=0$.
	\item The feasible set is an origin-centered Euclidean ball such that $\|x_t\|_2 \leq (D/2)$ for some $D$, where $D$ becomes the set diameter.
	\item $y_t$ are constructed as $y_t = u_t+v_t$, where $u_t$ and $v_t$ have different dynamics.
	\item The two-dimensional vectors $u_t$ and $v_t$ are constructed as
	\begin{align*}
			&u_{t,1} = U_t \cos(\theta_t), \qquad &u_{t,2} = U_t \sin(\theta_t),
		\\	&v_{t,1} = V_t \cos(\gamma_t), \qquad &v_{t,2} = V_t \sin(\gamma_t),
	\end{align*}
	where $U_t,V_t$ are randomly selected from $[0.5,1.5]$ via the uniform distribution in an independently and identically distributed manner.
	\item The sequences of $\theta_t$ and $\gamma_t$ are subject to change in time. The number of rounds between successive changes gets progressively larger. This ensures that the quantity of change with respect to the phases $(\theta_t,\gamma_t)$ remains in $o(T)$, which makes learning possible. The distinction lies in the fact that $\gamma_t$ sequence displays a less dynamic nature. 
	\item For a change following time $t$, $\theta_{t+1}$ is selected from $[0,2\pi)$ in a uniformly random manner. For $\gamma_t$, the change is such that $\gamma_{t+1} = \gamma_t + \gamma_t'$, where $\gamma_t'$ is selected from $[0,\Gamma_t]$ in a uniformly random manner, where $\Gamma_t > 0$ also gets progressively smaller.
\end{itemize}

\subsection{Subjects of Comparison} \label{sec:comparison}
A total of five estimators are run for this estimation task, where two of them are for comparison and baseline generation.
The first is \textbf{True Oracle}, which knows the phases ($\theta_t$, $\gamma_t$) and the stable magnitudes $(\|u_t\|_2$,$\|v_t\|_2)$, which are the median $1$. Since the randomness of amplitudes $U_t$,$V_t$ occurs every round, the best candidate for the optimal competitor with sub-linear $o(T)$ path variation is this true oracle. The second one is \textbf{Last Best}, which presumes to know the last target $y_t$ and sets $\decision_{t+1} = y_t$. For the others, $y_t$ is partially known via $g_t$ \eqref{eq:sim_grad}.

The other three are the sub-gradient based learning algorithms, as explained in this work. 
The first of these is \textbf{Static}, which competes against the best fixed decision.
The second one is \textbf{Dynamic}, which presumes to know the path variation of the best possible-to-learn competitor (i.e. the true oracle) and competes against dynamic strategies, in a min-max optimal manner for the known path variation. 
The third on is \textbf{Universal}, which is the main result of this work, as in \ref{sec:no_know}.

\subsection{Performances}
\begin{figure}
	\centering
	\begin{subfigure}
		\centering
		\includegraphics[width=0.986\linewidth]{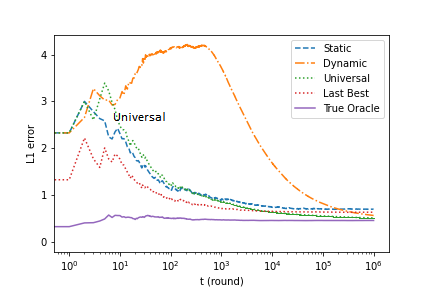}
		\label{fig:whole}
	\end{subfigure}
	\hfill
	\begin{subfigure}
		\centering
		\includegraphics[width=0.986\linewidth]{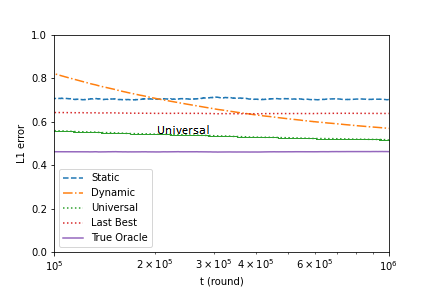}
		\label{fig:zoom}
	\end{subfigure}
	\caption{Online Estimation Performances (Linear-Log Plots)}
	\label{fig:simulation}
\end{figure}

Figure \ref{fig:simulation} (semi-log) shows the performance of each estimator in Section \ref{sec:comparison}, where the errors are cumulatively averaged. 

As we have expected, `True Oracle' performs the best, since it is effectively the best strategy with sub-linear path variation. However, the true oracle is infeasible to acquire, so here, it serves as a goal (best achievable) for the other learners. 

Until a certain round, `Dynamic' performs the worst. This can be explained with the fact that its step-size during the starting rounds becomes detrimentally large, since it uses the true path variation, which is a sizable quantity for the duration of one million rounds. Regardless, the final result demonstrates the validity of our analyses, since its performance surpasses two of the others.

The `Universal' algorithm performs the best and most robustly, as expected in Section \ref{section:domain}.

\newpage
\section{Conclusion} \label{sec:conclusion}
We first introduced an optimal sequential selection of the learning rates $\eta_t$ for the projected online subgradient descent algorithm, and achieved the minimax optimal \emph{dynamic} regret $O(\sqrt{D\left(P+D\right)}G_T)$ with the comparator path variation bounded
as $\sum_{t=1}^{T-1} \|\decision_{t+1}^* - \decision_t^*\| \leq P$, for a decision set $\Set$ with the diameter $D$ and the squared subgradient norm sums abiding by $G_T^2 = \sum_{t=1}^{T} \|g_t\|^2$. This guarantee is completely adaptive to the subgradient sequence in the minimax sense. 
We then introduced an approach to handle a time-growing $P$, i.e. $P(T)$, via resetting the learning rates at critical rounds.
This approach has resulted in a 
$\tilde{O}(1)$ multiplicative regret guarantee redundancy, all the while preserving constant-per-round computational and memory complexity. Similarly, we have also investigated the case of partial feedback on $P$ with minimax optimal guarantees.
Furthermore, we also showed the ability to distributively optimize the individual coordinates with independent runs of our algorithm and achieve minimax optimal dynamic regret guarantees for separable (e.g. hyper-rectangular) decision sets. 

Beyond these, we have further managed to eliminate the requirement of knowledge regarding the path variation bound $P$, whether a predetermined value (constant) or in the form of a time-growing function $P(T)$. We have accomplished this by transforming our initial procedure into a two-layered approach, where in one layer, we run multiple versions of the initial procedure as agents, with each agent having set a carefully selected but different possibilities for the unknown $P$, and in the other layer, we combine the decisions produced by these versions under mixture of experts setting in a recursive and cascading manner. Hence, we decreased both the overall computational and memory complexities by allowing certain orders of discrepancies between the actual $P$ and the $P_m$ set by different parallel running versions of the original procedure, i.e. within a multiplicative constant factor where some $P_m$ is less than the double and greater than the half of true $P$. This modification to our approach also comes with the property of universality such that any $P$ can be true $P$, i.e. we compete against all comparator sequences and obtain regret guarantees which are minimax optimal with respect to the path variations of each comparator sequence separately. 
Additionally, to achieve a truly online behavior with no knowledge on the time horizon $T$, we restructured these individual runs incorporating specific forms of time increasing $P(T)$, namely a piecewise combination achieved by a linearly increasing function followed by a constant indefinitely. This approach have also resulted in a
branching-out formation in the expert (agent) ensemble similar to a stairway into the higher orders of $P$ as more runs are incorporated into the ensemble at critical rounds as time goes on, to account for the higher $P$. 

Finally, we introduced different approaches to reduce time, computational and memory complexities, to the extent of constant per round, with $\widetilde{O}(1)$ multiplicative regret redundancies. Moreover, we have also displayed that universality can also be achieved in the form of our decision set (namely its diameter), i.e. the dependence of our guarantees on diameter $D$ is replaced $D_* = \max_{1\leq t\leq T} \|\decision_t^* - \decision\|$ for optimal concentration center $\decision$.

\newpage
\bibliographystyle{IEEEtran}
\bibliography{IEEEfull,bibliography}

\newpage
\appendix\label{sec:appendix}
\subsection*{Proof of Theorem \ref{theorem:gradient_descent_regret}}
	Consider the first \textit{while} loop at Line \ref{0-vec} in \figurename \ref{algorithm:gradient_descent}, which terminates at $t = t_0 \leq T$, where $t_0 \geq 1$. If the \textit{while} loop does not terminate, it means all the sub-gradients $g_t$ are zero-vectors and according to \eqref{eq:linearized_regret}, we trivially incur $0$ regret.
	
	Define $\altdec_{t+1} = \decision_t - \eta_t g_t$ such that $\decision_{t+1} = \proj{\Set}{\altdec_{t+1}}$ for $t\geq t_0$.
	Then, using \eqref{eq:linearized_regret}, we replace $g_t$ with $(1/\eta_t)(\decision_t - \altdec_{t+1})$ 
	and after rearranging the right-hand side, we get
	\begin{equation*}
	\dynamicregret \leq \sum_{t=t_0}^T \frac{1}{2\eta_t} (\|\decision_t - \decision_t^*\|^2 - \|\altdec_{t+1} - \decision_t^*\|^2) + \frac{\eta_t}{2} \|g_t\|^2,
	\end{equation*}
	since $g_t^\top (\decision_t - \decision_t^*) = 0$ for $t < t_0$.
	
	Provided that $\decision_{t+1} = \proj{\Set}{\altdec_{t+1}}$ where $\Set$ is a convex set, we have $\|\decision_{t+1} - \decision_t^*\| \leq \|\altdec_{t+1} - \decision_t^*\|$ for $\decision_t^* \in \Set$ \cite{Bubeck}. 
	
	Noting $\eta_t \geq 0$ for $t\geq t_0$, we upper bound $-\|\altdec_{t+1} - \decision_t^*\|$ with $-\|\decision_{t+1} - \decision_t^*\|$ and, for $t_0\leq t\leq T-1$, we further upper bound as
	\begin{align*}
	-\|\decision_{t+1} - \decision_t^*\|^2
	&= -\|\decision_{t+1} - \decision_{t+1}^* + \decision_{t+1}^* - \decision_t^*\|^2
	\leq -\|\decision_{t+1} - \decision_{t+1}^*\|^2 + 2D\|\decision_{t+1}^* - \decision_t^*\|,
	\end{align*}
	since $\|\decision_{t+1} - \decision_{t+1}^*\| \leq D$ where $D$ is the diameter of $\Set$ which includes all iterations $\decision_{t}$ and optimal points $\decision_{t}^*$.
	
	We also bound $-\|\decision_{T+1} - \decision_T^*\|/\eta_T\leq 0$. After regrouping
	\begin{equation*}
	\dynamicregret \leq
	\sum_{t=t_0}^T \frac{D_t^2}{2} \left(\frac{1}{\eta_t} - \frac{1}{\eta_{t-1}}\right) 
	+ D\sum_{t=t_0}^{T-1} \frac{P_t}{\eta_t}
	+ \sum_{t=t_0}^T \frac{\eta_t}{2} \|g_t\|^2,
	\end{equation*}
	where $D_t = \|\decision_t - \decision_t^*\|$, $P_t = \|\decision_{t+1}^* - \decision_t^*\|$ and $1/\eta_{t_0-1} = 0$ is a placeholder.
	
	Since $(1/\eta_t-1/\eta_{t-1}) \geq 0$ for $t \geq t_0$, we further upper bound by replacing all $D_t$ with $D$ again. This turns the first sum of the right-hand side into a telescoping sum. After we additionally bound $P_t/\eta_t \leq P_t/\eta_T$ and $\sum_{t=t_0}^{T-1} P_t \leq \sum_{t=1}^{T-1} P_t$, 
	\begin{equation*}
	\dynamicregret \leq \frac{D^2(P/D+1/2)}{\eta_T} + \sum_{t=t_0}^T \frac{\eta_t}{2} \|g_t\|^2,
	\end{equation*}
	where $P \geq \sum_{t=1}^{T-1} P_t$. This concludes the proof.

\subsection*{Proof of Corollary \ref{corollary:gradient_descent_adaptive_eta}}
	According to \eqref{eq:root_sum_g_t^2}, $G_t$ is a nondecreasing nonnegative sequence. Until $G_t > 0$ for some $t=t_0$, we incur $0$ regret. Afterwards, for $t \geq t_0$, $\eta_t$ becomes a nonincreasing positive sequence. Thus, we can build upon the result of Theorem \ref{theorem:gradient_descent_regret}.
	
	From \eqref{eq:root_sum_g_t^2}, $\|g_t\|^2 = G_t^2 - G_{t-1}^2$ where $t \geq 1$. Combined with Theorem \ref{theorem:gradient_descent_regret} and "difference of two squares",
	\begin{equation*}
	\dynamicregret \leq 
	\frac{D^2(P/D+1/2)}{\eta_T} + \sum_{t=t_0}^T \frac{\eta_t}{2} (G_t - G_{t-1})(G_t + G_{t-1}).
	\end{equation*}
	As $\eta_t$'s are positive and $G_t$'s are nondecreasing, we can upper-bound right-hand side by replacing $(G_t + G_{t-1})$ with $2G_t$. Then, we put in $\eta_t = DG_t^{-1}\sqrt{\hat{P}/D+1/2}$ and obtain a telescoping sum. After we also bound $-G_{t_0}$ with $0$, we arrive at the corollary.
	
\subsection*{Proof of Claim \ref{claim}}
Denote $A_T = \sum_{t=1}^T g_t g_t^T$. Consequently,
\begin{equation} \label{eq:trace}
	tr(A_T) = \sum_{t=1}^{T} tr\left(g_t g_t^T\right) = \sum_{t=1}^{T} g_t^T g_t = \sum_{t=1}^{T} \|g_t\|^2,
\end{equation}
since trace is a linear operation, is equivalent to summing the eigenvalues, and only nonzero eigenvalue of $g_t g_t^T$ is $g_t^T g_t$.

Denote the eigenvalues of $A_T$ as $\lambda_1, \ldots, \lambda_\dimcnt$. 
We also note that positive semidefinite matrices $g_t g_t^T$ sum to $A_T$. Therefore, $A_T$ is also a positive semidefinite matrix where $\lambda_1,\ldots,\lambda_N$ are all nonnegative. Consequently, the square root operation on the symmetric $A_T$ effectively replaces the eigenvalues with their square roots. This implies
\begin{equation} \label{eq:duchis}
	tr\left(\left[\sum_{t=1}^T g_t g_t^T\right]^{1/2}\right) = \sum_{i=1}^N \sqrt{\lambda_i}.
\end{equation}
Additionally, we have 
$
	\sqrt{\sum_{t=1}^T \|g_t\|^2} = \sqrt{\sum_{i=1}^N \lambda_i}.
$
from \eqref{eq:trace} due to the trace operation. 
Comparing the squares of this and \eqref{eq:duchis} while noting that $\lambda_i\geq 0$, we arrive at the claim.

\newpage
\subsection*{Proof of Theorem \ref{theorem:lower_bound_sum}}
	To show that some sequence of functions $\{f_t(\cdot)\}_{t=1}^T$ exists -in accordance with our decisions- which results in at least the worst-case regret lower bound in hindsight as claimed in this theorem, we are free to restrict our analysis to linear functions, i.e. $f_t(\cdot) = g_t^\top (\cdot)$, and show that, in hindsight, a function sequence consisting of only such linear functions exists and satisfies the intended lower bound in the theorem. This can be thought of as easing the analysis by further lower bounding. Moreover, to prove the existence of such a function sequence, we only need to take expectation of the said regret lower bound over our candidate sequences with respect to some distribution, both of which are chosen by us in a purposeful manner. This step can also be thought of as further lower bounding. One thing to note is that both of these "lower bounding" steps do not loosen the bounds much, as the results are shown to match with the regret guarantees (upper bounds) we have previously generated, up to a constant factor. Considering these, the worst-case dynamic regret $\overline{\dynamicregret}$ satisfies
	\begin{equation*}
	\overline{\dynamicregret} \geq \E{}{\sum_{t=1}^{T} g_t^\top \decision_{t} - \min_{\{\decision_t^*\}_{t=1}^T \in \Omega_T^P} \sum_{t=1}^T g_t^\top \decision_{t}^*},
	\end{equation*}
	over some expectation for $g_t$. We lower bound further by restricting $\{\decision_t^*\}_{t=1}^T \in \Omega_T^P$ such that it remains constant at certain time intervals, i.e. $\decision_t^* = \decision_{(k)}^* \in \Set$ for $t_{k-1} < t\leq t_k$ where $1\leq k\leq \lfloor P/D \rfloor + 1$ and $t_0=0$, $t_{\lfloor P/D \rfloor + 1}=T$. This is a valid lower bounding as we effectively shrink the search space of $-\min(\cdot)$ operation and it is fully encapsulated by $\Omega_T^P$ as $P \geq D(\lfloor P/D \rfloor)$. This results in a further lower bound
	\begin{align*}
	\overline{\dynamicregret} \geq &\E{}{\sum_{k=1}^{\lfloor P/D \rfloor + 1}\left(\sum_{t=t_{k-1}+1}^{t_k} g_t^\top \decision_{t} - \sum_{t=t_{k-1}+1}^{t_k} g_t^\top \decision_{(k)}^*\right)},
	\\&\text{ where }
	\decision_{(k)}^* = \argmin{\decision \in \Set} \sum_{t=t_{k-1}+1}^{t_k} g_t^\top \decision.
	\end{align*}

	This worst-case regret lower bound can be simplified via an analysis similar of which can be found in \cite[Appendix F]{Orabona} and shall not be repeated here explicitly. The analysis includes, by assuming "the worst", selecting $g_t$ such that they have norms $\|g_t\|$ and they are parallel to each other and the line joining any two points in $\Set$ farthest from each other. The direction of each $g_t$ is uniformly randomly selected using independent Rademacher distributions out of the two possible directions. The analysis concludes with the use of Khinchin's inequality separately for each stationary $\decision_{(k)}^*$, which gives
	\begin{equation*}
	\overline{\dynamicregret} \geq \sum_{k=1}^{\lfloor P/D \rfloor + 1} \frac{D}{2\sqrt{2}} \sqrt{\sum_{t=t_{k-1}+1}^{t_k} \|g_t\|^2}.
	\end{equation*}
	Then, there exists a scenario where $g_t$ sequence is such that $\sum_{t=t_{k-1}+1}^{t_k} \|g_t\|^2 = G_T^2/(\lfloor P/D \rfloor + 1)$. This is a rather mild assumption as we assume the setting to be adversarial, i.e. free in its choice of $g_t$, and, furthermore, $t_k$ are selected by the adversary (environment/setting). 
	
	Thus, even in the presence of Lipschitz constraints, i.e., $\|g_t\|\leq L_t$, for sufficiently large $T$ and relatively low (e.g. finite) $\|g_t\|, L_t$, the adversary can enforce this assumption with arbitrarily small error. Consequently, replacing each summand in the lower bound with this mild assumption gives the bound in this theorem.

\subsection*{Proof of Corollary \ref{corollary:lower_bound_max}}
	We similarly restrict $\xts$ as in the proof of Theorem \ref{theorem:lower_bound_sum} such that it remains constant inside distinct time intervals. Then, 
	in a scenario which is possibly the "worst-case", $\{g_t\}_{t=1}^T$ sequence can be such that, for each $k$, we have $\sum_{t=t_{k-1}+1}^{t_k} \|g_t\|^2 \geq L^2 \lfloor T/(\lfloor P/D \rfloor + 1) \rfloor$, 
	since 
	$t_k$ can be freely chosen by the adversarial setting which ensures that, when the time segments are of equal length, $t_k - t_{k-1}$ can at least be $\lfloor T/(\lfloor P/D \rfloor + 1) \rfloor$ for a total of $(\lfloor P/D \rfloor + 1)$ segments. Since $\lfloor P/D \rfloor + 1 \leq T$ and each segment is at least of length $1$, we also get $\sum_{t=t_{k-1}+1}^{t_k} \|g_t\|^2 \geq L^2 (T/2)/(\lfloor P/D \rfloor + 1)$ which results in the corollary similar to Theorem \ref{theorem:lower_bound_sum}.
	
\subsection*{Proof of Theorem \ref{theorem:with_resets}}
	The regret inequality directly follows from Corollary \ref{corollary:gradient_descent_adaptive_eta} since the algorithm reset for durations corresponding to each $P_k$ which are known prior to the times they are needed for each respective run. 
	This is a tight bound due to the following analysis.
	
	Consider the sum $\sum_{k=1}^{K} \sqrt{a_k x_k}$ with nonnegative constants $a_k$'s and variables $x_k$'s under the constraint 
	$$\sum_{k=1}^{K} x_k = X.$$ This sum is concave with respect to the variables $\{x_k\}_{k=1}^K$ and is maximized when $x_k = s\, a_k$ for some $s$. By summing both sides of this equation over all $k$, we see that $s = X/\sum_{k=1}^{K}a_k$. Then, when we substitute for $x_k$ and, following that, also substitute for $s$ in the sum, it appears that
	\begin{displaymath}
	\sum_{k=1}^{K} \sqrt{a_k x_k} \leq \sqrt{\sum_{k=1}^{K}a_k} \sqrt{\sum_{k=1}^{K}x_k},
	\end{displaymath}
	after we resubstitute for $X$ with $\sum_{k=1}^{K} x_k$. Back to the problem at hand, when we set $a_k = P_k/D + 1/2$ and $x_k = \sum_{t=t_{k-1}+1}^{t_k} \|g_t\|^2$, we arrive at the theorem.

\subsection*{Proof of Theorem \ref{theorem:constraint_grows_in_time}}
	We employ an analysis similar to the proof of Theorem \ref{theorem:with_resets}. Then, the resets result in the regret
	\begin{equation*}
	\dynamicregret 
	\leq 2D\sqrt{\sum_{k=1}^{K} 
		(2^{k-1}-1) + \frac{K}{2}} \sqrt{\sum_{t=1}^{T} \|g_t\|^2},
	\end{equation*}
	where, we know, $P(T) > D(2^{K-2} - 1)$. After bounding with $P(T)$ and further arrangements, we arrive at our result. The optimality claim is apparent after noticing that $K=2$ maximizes the gap since, when $K=1$, the disparity (ratio) between $P(T)/D$ and its floor cannot exceed $1$.
	
\newpage
\subsection*{Proof of Lemma \ref{lemma:two-expert}}
	The losses used in the mixture from \cite{Cesa-Bianchi_improved} are
	\begin{displaymath}
		l_{t,m} \define \loss_t^\top (\decision_t^m) \text{ and } l_{t,(m)} \define \loss_t^\top (\decision_t^{(m)}).
	\end{displaymath}
	The regret component from the mixture reduces to 
	$$O\left(\sqrt{\sum_{t=1}^T \sum_{m' \in \{m,(m)\}} p_{t,m'} \left(l_{t,m'} - u_{t,(m-1)}\right)^2} \right)$$
where $u_{t,(m-1)} = \min(l_{t,m},l_{t,(m)})$ and $\{p_{t,m}, p_{t,(m)}\}$ are the mixture weights which sum to $1$. This bound is achieved after modifying the mixing method in \cite{Cesa-Bianchi_improved} by considering only one-sided losses, hence eliminating the need for an additional redundancy to upper-bound $\exp(x)$ where $x>0$ and having simplified mixture learning rates. One notices that both $(l_{t,m} - u_{t,(m-1)})^2$ and $(l_{t,(m)} - u_{t,(m-1)})^2$ are upper-bounded by $D\|\loss_t\|^2$ since $u_{t,(m-1)} = \loss_t^\top x$ for some $x \in K$. Thus, we obtain the bound in this lemma.

\end{document}